\let\proof\@undefined                   
\let\endproof\@undefined                
\newtheorem{thm}{Theorem}
\newtheorem{lem}{Lemma}
\newtheorem{cor}{Corollary}
\title{From Global Linear Computations to Local Interaction Rules}
\author{Zak Costello and Magnus Egerstedt\\
School of Electrical and Computer Engineering\\
Georgia Institute of Technology\\
\texttt{\{zak.costello,magnus\}@gatech.edu}}
\date{}
\begin{document}
\maketitle

\begin{abstract}
A network of locally interacting agents can be thought of as performing a distributed computation. But not all computations can be faithfully distributed. This paper investigates
 which global, linear transformations can be computed using local rules, i.e., rules which rely solely on information from adjacent nodes in a network. The main result states that a linear transformation is computable in finite time using local rules if and only if the transformation has positive determinant.
An optimal control problem is solved for finding the local interaction rules, and simulations are performed to elucidate how optimal solutions can be obtained.
\end{abstract}

\section{Introduction}
One common theme when designing control and coordination mechanisms for distributed, multi-agent systems is that the information, on which decisions are based, is restricted to be shared among agents that are adjacent in the underlying information-exchange network, e.g., \cite{BCM09,mesbahi2010graph,olfati2007consensus,RenBeard}. As a result, local rules are needed for processing the information and coordinating the agents in the network in such a way that some global objective is achieved.
Problems that fit this description can be found in a variety of applications, including power systems \cite{dimeas2005operation,Prosumer,SG}, formation control \cite{balch1998behavior,JE07,Jadbabaie03,Tanner03,vijay}, distributed sensor networks \cite{romer2004design,fumin}, smart textiles \cite{marculescu2003electronic}, and distributed optimization \cite{cortes2005coordination,NOP10}. In this paper we take initial steps towards developing a general theory of local implementability/computability of such global behaviors. 

As such, one key aspect of
algorithm design is the definition of local interaction rules that produce desired global behaviors. An example of this are consensus algorithms for computing averages in a distributed manner. In fact, consensus plays a role in many different applications, including multi-agent robotics, distributed sensor fusion, and power network control, e.g., \cite{olfati2007consensus,Prosumer,olfati2005consensus}. To this end, let the scalar state of each node in a network be $x_i \in \mathbb{R}$, with initial condition $x_{i}(t_0)=\xi_i, i = 1,\dots,n$, where $n$ is the number of nodes in the network.  By stacking the states together in $x\in\mathbb{R}^n$, we implicitly perform an asymptotic, global computation through the
so-called consensus equation 
\begin{equation}
\dot{x}_i = -\sum\limits_{j \in N_i} (x_i - x_j),
\end{equation}
where $N_i$ encodes a neighborhood relationship in the underlying information-exchange network. And, as long as the network is connected and undirected, all node values will converge to the same value, namely the average of the initial conditions, e.g., \cite{mesbahi2010graph}. In other words, 
\begin{equation}
\lim\limits_{t \to \infty} x(t)= \frac{1}{n}\begin{bmatrix}
1 & \ldots & 1 \\
\vdots & \ddots & \vdots \\
1 & \ldots & 1
\end{bmatrix}\xi,
\end{equation}
where $\xi$ is the vector containing all the initial node values. As such, the 
consensus equation is asymptotically computing the average, which is
a global property since it relies on the state of every node in the network. 

In this work, we are interested in problems where networks are tasked with computing arbitrary linear transformations of the initial node states.
 In particular, we answer two fundamental questions: {\it What global, linear transformations can be computed using local rules? How do we find the local rules that would compute a given linear transformation?} This paper answers both questions and presents existence guarantees of local rules for a given computation. 


Some work has been done in the general area of obtaining global information with local interactions. In \cite{sundaram2008distributed}, a fixed weighting scheme was used to compute linear transformations on networks. That work focused on cases where each agent computes the same linear transformation, as is the case with the consensus computation where each node computes the average, while we, in this paper, do not wish to restrict ourselves to this special case.
 In a certain sense, the investigation in
\cite{rotkowitz2006characterization} follows this line of inquiry as well. There, quadratic invariance was used to establish whether or not a convex optimization problem
exists whose solution is a decentralized implementation of a centralized
 feedback controller. \cite{swigart2009graph} further expounds on this idea and provides a practical, graph theoretic method for finding this distributed controller. Our work distinguishes itself from this body of work by using a time varying weighting method, which admits the computation of global, linear transformations in finite time. 

In fact,
in this paper, we consider computations that are to be performed using local rules over a static and undirected information-exchange network. The local rules, once obtained, admits a decentralized implementation, where ``decentralized" in this context means that each node in the network only needs to communicate state information among adjacent nodes in the network. In particular, we ask if it is possible to define local interaction laws such that $x(t_f)=T\xi$, given the linear transformation $T$ and the initial conditions $x(t_0)=\xi$. Necessary and sufficient conditions are given for this to be possible, and they state that local interaction rules exist if and only if $T$ has positive determinant.

The remainder of this paper is organized as follows:  In Section \ref{ProbDef}, the problem 
under consideration is introduced
 and the general class of admissible, local interaction rules is established. In Section \ref{Cntrl}, necessary and sufficient conditions are presented under which 
global, linear computations can be performed in a decentralized manner.
In Section \ref{OptimalControlProb}, an optimal control problem is formulated, which provides a way to find 
interaction rules,
the time varying weighting functions numerically, and in Section \ref{Applications} two 
instantiations
of the method are presented 
together with simulation results.

\section{Problem Definition}
\label{ProbDef}
To formalize what is meant by local interactions, we first need to discuss the information-exchange network over which the interactions are defined. To this end, let $V$ be a vertex set with cardinality $n$, and $E \subset V \times V$ be an edge set with cardinality $m$, where we insist on $(i,i)\in E,~\forall i\in V$, as well as $(i,j)\in E\Leftrightarrow(j,i)\in E$. Let $G$ be the graph $G = (V,E)$, where the assumptions on $E$ imply that $G$ is undirected and contains self-loops. We moreover assume that $G$ is connected. As the main purpose with $G$ is to encode adjacency information in the information-exchange network, we introduce the operator $\mbox{sparse}(G)$ to capture these adjacencies, and we say that an $n\times n$ matrix $M\in \mbox{sparse}(G)$ if $(i,j)\not\in E\Rightarrow M_{ij}=0$.

There are a number of different ways in which local interactions can be defined. In this paper, we assume that they are given by time-varying, piecewise continuous weights associated with the edges in the network. If $x_i\in\mathbb{R}$ is the scalar state associated with node $i\in V$, we define a local interaction as a continuous-time process
\begin{equation}
\dot x_i(t)=\sum_{j|(i,j)\in E}w_{ij}(t)x_j(t).
\end{equation}
Note that we do not insist on $w_{ij}=w_{ji}$ even though $G$ is undirected,  as shown in Figure \ref{fig:graph}.

\begin{center}
\begin{figure}[thpb]
\begin{center}
\includegraphics[scale=0.6]{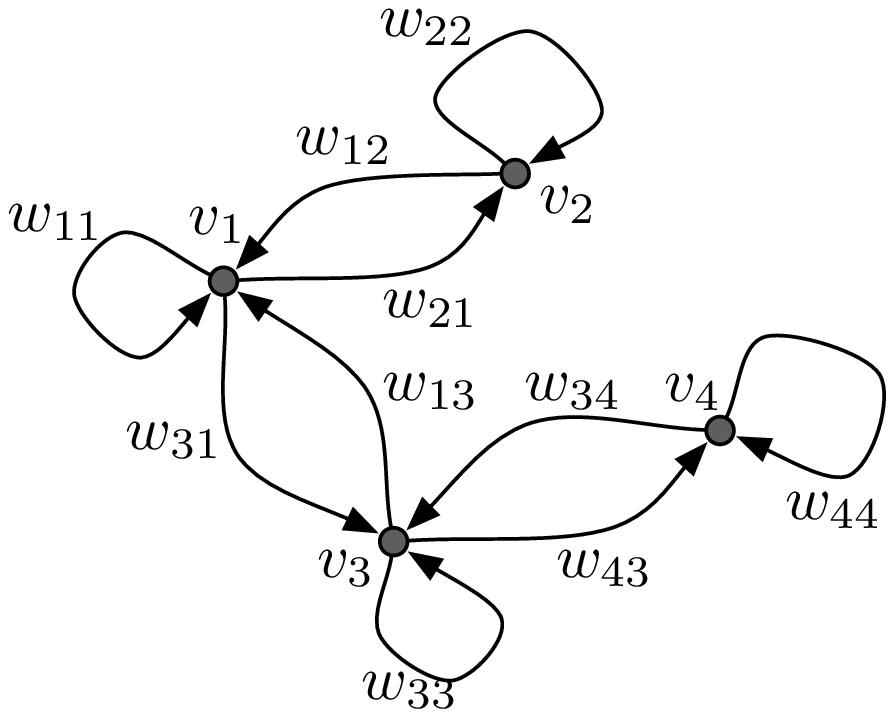}
\end{center}
\begin{equation*}
W = \begin{bmatrix}
w_{11} & w_{12} & w_{13} & 0 \\
w_{21} & w_{22} & 0 & 0 \\
w_{31} & 0 & w_{33} & w_{34} \\
0 & 0 & w_{43} & w_{44} \\
\end{bmatrix}\in sparse(G)
\end{equation*}
\caption{An example of the sparsity structure and the local interaction rules used in this paper.}
      \label{fig:graph}
\end{figure}
\end{center}

If we stack the states together in $x=[x_1,\ldots,x_n]^T\in\mathbb{R}^n$, what we mean by {\it local interactions} is thus
\begin{equation}
\dot x(t)=W(t)x(t),~~W(t)\in\mbox{sparse}(G),
\label{eq:sparse}
\end{equation}
with solution
\begin{equation}
x(t)=\Phi(t,t_0)x(t_0),
\label{eq:sol}
\end{equation}
where $\Phi$ is the state transition matrix associated with the system in Equation \ref{eq:sparse}, e.g., \cite{RWB}.

.

Now, the purpose of the local interactions is to perform a global, linear computation. In other words, given the $n\times n$ matrix $T$ and the initial condition $x(t_0)=\xi$, what we would like to do is find $W(t)\in\mbox{sparse}(G),~t\in[t_0,t_f]$, such that 
\begin{equation}
x(t_f)=T\xi.
\end{equation}
But, comparing this expression to Equation \ref{eq:sol}, this simply means that what we would like is
\begin{equation}
\Phi(t_f,t_0)=T.
\end{equation}
If this was indeed the case, then the local interactions, as defined through $W(t)$, would indeed compute $T\xi$ over the interval $[t_0,t_f]$ for all possible values of $\xi$, i.e., 
one can think of the network as a black box that takes $\xi$ as the input at time $t_0$ and, at time $t_f$, returns $T\xi$ as the output.

As a final observation before we can formulate the general problem of performing global, linear computations using local interactions, we note that state transition matrix satisfies the same dynamics as 
Equation \ref{eq:sparse}, i.e.,
\begin{equation}
\label{expandedDynamics}
\frac{d{\Phi}(t,t_0)}{dt} = W(t)\Phi(t,t_0),
\end{equation}
with initial condition $\Phi(t_0,t_0)=I$, where $I$ is the $n\times n$ identity matrix.


\medskip
\noindent
{\bf Problem 1 [Local Computation]}

\noindent
{\it 
Given a linear transformation $T$ and a connected graph $G$,
find $W(t)\in \mbox{sparse}(G),~t\in[t_0,t_f]$, such that
\begin{equation}
\dot{\mathbf{X}}(t) = W(t)\mathbf{X}(t),
\end{equation}
with boundary conditions $\mathbf{X}(t_0)=I,~\mathbf{X}(t_f)=T$.}

%

\section{On the Existence of Solutions}
\label{Cntrl}
The main point with this paper is an exploration of what linear transformations $T$ admit a local implementation, i.e., for what $T$ Problem 1 has a solution. In this section, we develop necessary and sufficient conditions for this to be the case.

We start by observing that since $\mathbf{X}(t)$ is really the state transition matrix $\Phi(t,t_0)$, it is always invertible,
\begin{equation}
\mathbf{X}(t)^{-1}=\Phi(t,t_0)^{-1}=\Phi(t_0,t).
\end{equation}
As a direct consequence of this, $T$ has to be invertible for a solution to Problem 1 to exist, i.e., we need that $\mbox{det}(T)\neq0$. But, as $\mathbf{X}(0)=I$, we have that $\mbox{det}(\mathbf{X}(0))=1>0$. Moreover, the determinant of a matrix depends continuously on its entries, and therefore the only way for $\mbox{det}(\mathbf{X}(\tau))<0$ for some $\tau\in(t_0,t_f]$, there has to exist a $\tau'\in(t_0,\tau)$ such that $\mbox{det}(\mathbf{X}(\tau'))=0$. But this can not happen since $\mathbf{X}$ is always invertible. From this it directly follows that for Problem 1 to have a solution, $T$ has to satisfy $\mbox{det}(T)>0$. 

To state this fact more compactly, let $\mbox{GL}_{+}^n(\mathbb{R})$ denote the set of all $n\times n$, real matrices with positive determinant. We have thus established the following necessary condition for the existence of a solution:

%
%

\begin{lem} 
\label{nosol}
A solution to Problem 1 exists only if $T\in\mbox{GL}_{+}^n(\mathbb{R})$.
\end{lem}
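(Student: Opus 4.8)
The plan is to show directly that along any trajectory $\mathbf{X}(\cdot)$ generated by an admissible $W(\cdot)$ one has $\det(\mathbf{X}(t)) > 0$ for all $t \in [t_0, t_f]$, so that in particular $\det(T) = \det(\mathbf{X}(t_f)) > 0$, which is precisely the assertion $T \in \mbox{GL}_{+}^n(\mathbb{R})$.

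First I would note that $\mathbf{X}(t)$ is, by Equation \ref{expandedDynamics} and the remark preceding it, exactly the state transition matrix $\Phi(t,t_0)$ of the system $\dot x = W(t)x$; state transition matrices are invertible, so $\det(\mathbf{X}(t)) \neq 0$ for every $t$ in the interval. Second, since $W(\cdot)$ is piecewise continuous, the solution $\mathbf{X}(\cdot)$ of Equation \ref{expandedDynamics} is continuous in $t$, and $\det(\cdot)$ is a polynomial --- hence continuous --- function of the entries of a matrix; composing, $t \mapsto \det(\mathbf{X}(t))$ is a continuous, nowhere-zero, real-valued function on $[t_0,t_f]$. Third, because $\mathbf{X}(t_0) = I$ we have $\det(\mathbf{X}(t_0)) = 1 > 0$, and a continuous real-valued function on an interval that never vanishes cannot change sign (intermediate value theorem). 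Hence $\det(\mathbf{X}(t_f)) = \det(T) > 0$.

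I do not expect any genuine obstacle here; the only point requiring mild care is the regularity claim for $\mathbf{X}(\cdot)$ under a merely piecewise-continuous $W$, but continuity --- which is all the argument uses --- holds because the solution is obtained by concatenating, across the finitely many break points, solutions of linear ODEs with continuous right-hand sides. As a cleaner alternative one can avoid the sign/continuity argument altogether by invoking the Abel--Liouville--Jacobi identity $\det\Phi(t,t_0) = \exp\!\left(\int_{t_0}^{t}\mbox{tr}\,W(s)\,ds\right)$, whose right-hand side is positive by inspection; this yields $\det(T) > 0$ in a single step and makes explicit that the sparsity constraint $W(t)\in\mbox{sparse}(G)$ plays no role in the necessity direction.
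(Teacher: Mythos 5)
Your argument is correct and is essentially the same as the paper's: both use the invertibility of the state transition matrix, the continuity of the determinant in $t$, and the fact that $\det(\mathbf{X}(t_0))=1>0$ to conclude via the intermediate value theorem that $\det(T)>0$. The Abel--Liouville--Jacobi identity you mention as an alternative is a valid and slightly slicker route, but the paper does not use it.
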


%


One consequence of Lemma  \ref{nosol} is that it is impossible to use local rules, as understood in this paper, to achieve consensus in finite time. This follows directly from the fact that the consensus computation is given by the linear map
\begin{equation}
T_{cons}=\frac{1}{n}\mathbf{1}^T\mathbf{1},
\label{eq:tcons}
\end{equation}
where $\mathbf{1}$ is a vector of length $n$, with all entries equal to one. And,
$$
\mbox{rank}(T_{cons})=1,
$$
i.e., $\mbox{det}(T_{cons})=0$. We state this fact as a corollary:

\begin{cor}
There is no solution to Problem 1 which admits finite time consensus.\begin{footnote}{Note that this applies to any agreement across the nodes, i.e., not only to average consensus.}\end{footnote}
\end{cor}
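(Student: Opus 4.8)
The plan is to obtain the corollary directly from Lemma~\ref{nosol}. That lemma tells us that a solution to Problem~1 can exist only if $T\in\mbox{GL}_{+}^n(\mathbb{R})$, and in particular only if $\mbox{det}(T)\neq 0$; so it suffices to check that the consensus map violates this.

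For average consensus the verification is immediate from Equation~\ref{eq:tcons}: the matrix $T_{cons}$ is a scalar multiple of the all-ones matrix $\mathbf{1}\mathbf{1}^T$, hence $\mbox{rank}(T_{cons})=1$, which for a network with $n\geq 2$ nodes forces $\mbox{det}(T_{cons})=0$. Thus $T_{cons}\notin\mbox{GL}_{+}^n(\mathbb{R})$, and Lemma~\ref{nosol} forbids a solution.

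For the stronger statement in the footnote I would argue at the level of the image of $T$. By definition, a transformation that drives the network to \emph{agreement} must map every initial condition $\xi$ to a vector whose entries are all equal, i.e., $T\xi\in\mbox{span}(\mathbf{1})$ for all $\xi\in\mathbb{R}^n$. Hence the range of $T$ is contained in the one-dimensional subspace $\mbox{span}(\mathbf{1})$, so $\mbox{rank}(T)\leq 1$, and again $\mbox{det}(T)=0$ whenever $n\geq 2$. Invoking Lemma~\ref{nosol} once more completes the proof.

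The argument carries no real obstacle; the only point worth flagging is the tacit standing assumption that the network is nontrivial ($n\geq 2$), without which ``consensus'' collapses to the identity and the claim is vacuous. All of the content sits in the elementary rank computation, so I expect this to be a short, one-paragraph proof rather than a difficult one.
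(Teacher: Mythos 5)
Your proposal is correct and follows essentially the same route as the paper: invoke Lemma~\ref{nosol} and observe that $\mbox{rank}(T_{cons})=1$ forces $\mbox{det}(T_{cons})=0$, so $T_{cons}\notin\mbox{GL}_{+}^n(\mathbb{R})$. Your additional remark that any agreement map has range in $\mbox{span}(\mathbf{1})$ and hence rank at most one is exactly the observation behind the paper's footnote, so nothing is missing.
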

%

Now that we have established necessary conditions for Problem 1 to have a solution, we turn our attention to sufficient conditions. And, surprisingly enough, $T\in\mbox{GL}_{+}^n(\mathbb{R})$ turns out to be both necessary and sufficient for a solution to exist, which constitutes the main result in this paper:

\begin{framed}
\begin{thm}
\label{necAndSuff}
A solution to Problem 1 exists if and only if $T\in\mbox{GL}_{+}^n(\mathbb{R})$.
\end{thm}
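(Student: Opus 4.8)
The ``only if'' direction is exactly Lemma~\ref{nosol}, so the plan is to establish sufficiency: for an arbitrary $T$ with $\det(T)>0$ I would build a piecewise-constant $W(t)\in\mbox{sparse}(G)$ on a finite partition of $[t_0,t_f]$ whose state transition matrix is $T$. The enabling observation is that concatenation composes transition matrices: if $W$ equals a constant $W_\ell$ on the $\ell$-th of $k$ consecutive subintervals of length $\tau_\ell$, then $\mathbf{X}(t_f)=e^{W_k\tau_k}\cdots e^{W_1\tau_1}$. It therefore suffices to produce a collection of ``elementary'' matrices, each realized by a single constant $W\in\mbox{sparse}(G)$, whose finite products exhaust $\mbox{GL}_{+}^n(\mathbb{R})$.

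Two building blocks do the job. For an edge $(i,j)\in E$ with $i\neq j$, let $E_{ij}$ denote the matrix unit with a single $1$ in position $(i,j)$; taking $W=(s/\tau)E_{ij}\in\mbox{sparse}(G)$ yields, since $E_{ij}^2=0$, the transvection $e^{W\tau}=I+sE_{ij}$ for any $s\in\mathbb{R}$. Using instead a self-loop $(i,i)\in E$ with $W=cE_{ii}$ gives the positive diagonal scaling $e^{W\tau}=\mbox{diag}(1,\dots,e^{c\tau},\dots,1)$; choosing $c\tau=\ln\det(T)$, which is legitimate precisely because $\det(T)>0$, realizes $D=\mbox{diag}(\det(T),1,\dots,1)$. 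The step from edge-transvections to \emph{all} transvections uses connectivity of $G$: for a non-edge $(i,j)$ I would fix a simple path $i=v_0,v_1,\dots,v_\ell=j$ and apply the identity $(I+aE_{ik})(I+bE_{kj})(I-aE_{ik})(I-bE_{kj})=I+abE_{ij}$, valid whenever $i,j,k$ are pairwise distinct, to obtain $I+sE_{iv_2}$ from edge-transvections along $v_0v_1$ and $v_1v_2$, then $I+sE_{iv_3}$ from that together with the edge-transvection along $v_2v_3$, and so on by induction along the path (pairwise distinctness is guaranteed because the path is simple). Hence every transvection $I+sE_{ij}$, $i\neq j$, is a finite product of the edge building blocks.

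To finish, recall that $\mbox{SL}_n(\mathbb{R})$ is generated by transvections (Gaussian elimination/row reduction), and that any $T\in\mbox{GL}_{+}^n(\mathbb{R})$ factors as $T=D\,(D^{-1}T)$ with $D$ the scaling above and $D^{-1}T\in\mbox{SL}_n(\mathbb{R})$ since $\det(D^{-1}T)=1$. Expressing $D^{-1}T$ as a finite product of transvections and adjoining one more subinterval that realizes $D$ exhibits $T$ as a finite product of elementary matrices, each realizable on its own subinterval, so Problem~1 has a solution in finite time. The step I expect to be the main obstacle is the inductive ``bridge'': one must carry the commutator construction through with valid pairwise-distinct indices at every stage and track that, although the number of edge-transvections roughly doubles with each additional path edge, it stays finite. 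As a less constructive alternative one may invoke controllability of the matrix bilinear system $\dot{\mathbf{X}}=W\mathbf{X}$, $W\in\mbox{sparse}(G)$, viewed as a control system on the Lie group $\mbox{GL}_{+}^n(\mathbb{R})$: since $\mbox{sparse}(G)$ contains the matrix units $E_{ij}$, $(i,j)\in E$, which Lie-generate $\mathbb{R}^{n\times n}$ (again via $[E_{ik},E_{kj}]=E_{ij}$ along paths, together with the $E_{ii}$), and since $\mbox{GL}_{+}^n(\mathbb{R})$ is the connected component of the identity in $\mbox{GL}_n(\mathbb{R})$, the set of matrices reachable from $I$ is all of $\mbox{GL}_{+}^n(\mathbb{R})$.
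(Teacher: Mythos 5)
Your proof is correct, but it takes a genuinely different route from the paper. The paper proves sufficiency non-constructively: it vectorizes $\dot{\mathbf{X}}=W\mathbf{X}$ into a drift-free system, computes the Lie brackets $[\vec{g}_{ij},\vec{g}_{kl}]$ (Lemma~\ref{liebracketlemma}), uses connectivity of $G$ to show every $\vec{g}_{ij}$ lies in the involutive closure (Lemma~\ref{connectionLemma}), concludes via Chow's theorem that the system is locally controllable everywhere on $\mbox{GL}_{+}^n(\mathbb{R})$ (Theorem~\ref{cor:ctrb}), and then invokes connectedness of $\mbox{GL}_{+}^n(\mathbb{R})$ to flow from $I$ to $T$. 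Your primary argument instead factors $T$ explicitly as a finite product of elementary matrices --- transvections $I+sE_{ij}$ obtained from edges and, for non-edges, from the commutator identity $(I+aE_{ik})(I+bE_{kj})(I-aE_{ik})(I-bE_{kj})=I+abE_{ij}$ iterated along a simple path, plus one positive diagonal scaling from a self-loop to absorb $\det(T)$ --- and realizes each factor by a constant $W\in\mbox{sparse}(G)$ on its own subinterval. This is essentially the group-theoretic shadow of the paper's Lie-algebraic argument (your bracket $[E_{ik},E_{kj}]=E_{ij}$ is exactly the paper's Lemma~\ref{liebracketlemma}, and your path induction mirrors Lemma~\ref{connectionLemma}), but it buys something the paper does not deliver: an explicit, piecewise-constant, finite-time construction of the weights, which also sidesteps the paper's somewhat informal final step that local controllability along a connecting curve lets the drift-free system ``flow along this curve.'' What it gives up is the connection to the optimal-control machinery of Section~\ref{OptimalControlProb}, for which the differential-geometric controllability statement is the natural starting point. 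Your closing alternative --- right-invariant system on the Lie group $\mbox{GL}_{+}^n(\mathbb{R})$ with control directions Lie-generating $\mathbb{R}^{n\times n}$, hence reachable set equal to the identity component --- is essentially the paper's argument recast in Lie-group rather than distribution language. Two minor points worth making explicit in a final write-up: the ordering convention $\mathbf{X}(t_f)=e^{W_k\tau_k}\cdots e^{W_1\tau_1}$ means the factorization of $T$ must be scheduled right-to-left in time, and the case $n=1$ (where $\mbox{SL}_1(\mathbb{R})$ is trivial) is covered by the self-loop scaling alone.
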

\end{framed}

As we have already established sufficiency, what must be shown is that whenever $\mbox{det}(T)>0$, there is a $W(t)\in\mbox{sparse}(G)$ that drives $\mathbf{X}$ from $I$ to $T$.
The remainder of this section is devoted to the establishment of this fact. However, before we can give the proof to Theorem \ref{necAndSuff}, a number of supporting results are needed, involving the controllability of nonlinear, drift-free  systems, i.e., systems of the form
\begin{equation}
\label{driftFree}
\dot{x} = \sum\limits_{i=1}^p g_i(x) u_i,
\end{equation}
where $x\in\mathbb{R}^n$ is the state of the system, and $u_1,\ldots,u_p\in\mathbb{R}$ are the control inputs. For the sake of easy reference, we start by recalling Chow's Theorem, as formulated in \cite{sastry1999nonlinear}, for such drift-free systems:


\begin{thm}[Chow's Theorem, e.g. \cite{sastry1999nonlinear}]
The system in Equation \ref{driftFree} is locally controllable 
about a point $x_0$ if and only if
\begin{equation}
dim(\overline{\Delta}(x_0)) = n,
\end{equation}
where $\overline{\Delta}$ is the involutive closure of the distribution $\mbox{span}\{g_1,\ldots,g_p\}$.
\end{thm}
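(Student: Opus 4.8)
The plan is to prove the two implications separately, using the full-rank condition $\dim(\overline{\Delta}(x_0)) = n$ as the dividing line between motion that is trapped on a proper submanifold and motion that fills an open neighborhood of $x_0$. Throughout I assume the regularity under which the cited statement is an exact equivalence, namely that $\overline{\Delta}$ has locally constant dimension near $x_0$ (this holds automatically in the analytic category, which is the standard setting for the ``only if'' half). I also note at the outset that because \ref{driftFree} is drift-free, negative controls let us flow along $-g_i$; hence reachability and controllability coincide, and it suffices to reason about the reachable set from $x_0$. I write $\phi^{X}_t$ for the time-$t$ flow of a vector field $X$.

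For necessity I argue the contrapositive. Suppose $\dim(\overline{\Delta}(x_0)) = k < n$. Since $\overline{\Delta}$ is by construction involutive and, by the regularity assumption, of constant dimension $k$, the Frobenius theorem produces a local foliation whose leaf $S$ through $x_0$ is a $k$-dimensional integral manifold of $\overline{\Delta}$. Each $g_i$ is a section of $\Delta \subset \overline{\Delta}$ and is therefore tangent to $S$, so every trajectory of \ref{driftFree} starting at $x_0$, being a concatenation of flows of the $\pm g_i$, remains on $S$. The reachable set is thus contained in the proper submanifold $S$, which has empty interior in $\mathbb{R}^n$, so the system cannot be locally controllable. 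Hence local controllability forces $\dim(\overline{\Delta}(x_0)) = n$.

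For sufficiency I show that when $\dim(\overline{\Delta}(x_0)) = n$ the reachable set contains an open neighborhood of $x_0$. First choose, from among the $g_i$ and their iterated Lie brackets, vector fields $X_1,\ldots,X_n$ that are linearly independent at $x_0$; this is possible precisely because the involutive closure spans $\mathbb{R}^n$ there. The key analytic fact is that each bracket direction is \emph{approximable by admissible motions}: the flow commutator
\begin{equation}
\phi^{g_i}_{-s}\circ\phi^{g_j}_{-s}\circ\phi^{g_i}_{s}\circ\phi^{g_j}_{s}(x) = x + s^2\,[g_i,g_j](x) + o(s^2),
\end{equation}
together with its iterates, realizes the higher brackets, and the backward flows appearing in it are themselves admissible by time-reversibility. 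I then define the composite end-point map
\begin{equation}
\Psi(t_1,\ldots,t_n) = \phi^{X_n}_{t_n}\circ\cdots\circ\phi^{X_1}_{t_1}(x_0),
\end{equation}
whose differential at the origin has the independent vectors $X_1(x_0),\ldots,X_n(x_0)$ as columns and is therefore nonsingular. By the inverse function theorem $\Psi$ is a local diffeomorphism onto a neighborhood of $x_0$, and since each factor is produced (exactly, or to the required order) by the control system, that neighborhood lies in the reachable set.

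The main obstacle is the sufficiency direction, and within it the passage from the \emph{infinitesimal} bracket condition to an \emph{actual} open reachable set. Two technical points carry the weight. First, one must show that the $o(s^2)$ errors in the bracket approximation do not destroy the full rank of $D\Psi$; this is handled by a reparametrization (time-scaling the commutator so that the bracket appears at first order) and then applying the inverse function theorem to the resulting smooth map, whose differential is manifestly invertible at the origin. Second, one must verify that composing forward and backward flows keeps every intermediate control admissible, which is exactly where the drift-free structure is indispensable, since it guarantees time-reversibility without an uncontrollable drift term. Once these points are secured, the two implications combine to give the stated equivalence.
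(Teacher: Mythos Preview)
The paper does not prove this statement. Chow's Theorem is quoted from \cite{sastry1999nonlinear} purely ``for the sake of easy reference'' and is then invoked as a black box in the controllability argument leading to Theorem~\ref{cor:ctrb}. There is therefore no proof in the paper to compare your proposal against.

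That said, your sketch follows the standard textbook route (Frobenius for necessity, flow-commutators plus the inverse function theorem for sufficiency), so it is consistent with what one finds in the cited reference. One point to tighten if you keep this write-up: the map $\Psi$ you define composes flows $\phi^{X_j}_{t_j}$ where some $X_j$ are iterated brackets, but those flows are not themselves admissible trajectories of the control system. You gesture at this (``exactly, or to the required order''), but the honest argument replaces each $\phi^{X_j}_{t_j}$ by an admissible concatenation of $g_i$-flows that approximates it, and then checks that the resulting end-point map---now a genuine input-to-state map of the control system---still has full-rank differential. The usual way to make this clean is an inductive construction on the dimension of the reachable set rather than writing down $\Psi$ in one shot; as written, the passage from ``$D\Psi(0)$ is invertible'' to ``an open set is reachable'' skips exactly the step you flag as the main obstacle.
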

The system is moreover controllable if it is locally controllable everywhere. And, the proof that $T\in\mbox{GL}_{+}^n(\mathbb{R})$ is sufficient for Problem 1 to have a solution will hinge on showing that the dynamics, as defined through the local interaction rules in Equation \ref{eq:sparse}, is indeed controllable everywhere on $\mbox{GL}_{+}^n(\mathbb{R})$. To this end, we first must rewrite the dynamics in Problem 1 on the appropriate form. For this, 
we need the index matrix $\mathbb{I}_{ij} \in \mathbb{R}^{n\times n}$, which has a one at the $i$th row and $j$th column, and zeros everywhere else.
The index matrix allows us to rewrite
$$
\dot{\mathbf{X}}=W\mathbf{X}
$$
as
\begin{equation}
\dot{\mathbf{X}} = \left( \sum\limits_{i = 1}^n \sum\limits_{j = 1}^n W \odot \mathbb{I}_{ij} \right) \mathbf{X},
\end{equation}
 where the $\odot$ symbol represents element-wise matrix product, i.e.,
\begin{equation}
\dot{\mathbf{X}} = \left(
\begin{bmatrix}
w_{11} & \ldots & 0 	\\
\vdots       & \ddots & \vdots \\
0            & \ldots & 0  	\\
\end{bmatrix} + \ldots + 
\begin{bmatrix}
0 	&  \ldots  & 0 		\\
\vdots	& \ddots     & \vdots 	\\
0       & \ldots  & w_{nn}  	\\
\end{bmatrix}
\right) \mathbf{X},
\end{equation}
where we have surpressed the explicit dependence on $t$ for the sake of notational ease.

Rearranging the terms and letting
\begin{equation}
\label{gDef}
g_{ij}(\mathbf{X}) = \mathbb{I}_{ij} \mathbf{X},
\end{equation}
we get the drift-free matrix formulation
\begin{equation}
\label{driftFreeSystem}
\dot{\mathbf{X}} = \sum\limits_{i=1}^n \sum\limits_{j|(i,j)\in E} g_{ij}(\mathbf{X}) w_{ij}.
\end{equation}

To clarify, $g_{ij}(\mathbf{X})$ is a matrix whose $i$th row contains the $j$th row of $\mathbf{X}$, with the rest of the elements in the matrix equal to $0$,
\begin{equation}
g_{ij} (\mathbf{X})= 
\begin{matrix}
1 \\
\vdots \\
i - 1 \\
i \\
i + 1 \\
\vdots \\
n
\end{matrix}
\begin{bmatrix}
0 & \ldots & 0 \\
\vdots & \ddots & \vdots \\
0 & \ldots & 0 \\
\mathbf{X}_{j1} & \ldots & \mathbf{X}_{jn} \\
0 & \ldots & 0 \\
\vdots & \ddots & \vdots \\
0 & \ldots & 0 \\
\end{bmatrix}.
\end{equation}

As a final step towards a formulation that is amenable to Chow's Theorem, let the
vectorized version of $g_{ij}$ be given by 
$\vec{g}_{ij} = \mbox{vec}(g_{ij})$,
resulting in the
vectorized version of Equation \ref{driftFreeSystem},
\begin{equation}
\mbox{vec}(\dot{\mathbf{X}}) = \sum\limits_{i=1}^n \sum\limits_{j|(i,j)\in E} \vec{g}_{ij}(\mathbf{X}) w_{ij}.
\label{eq:vec}
\end{equation}

The first order of business towards establishing controllability of this system is the derivation of the Lie brackets for the system in Equation \ref{eq:vec}.
\begin{lem}
\label{liebracketlemma}
\begin{equation}
[\vec{g}_{ij}(\mathbf{X}),\vec{g}_{kl}(\mathbf{X})] = 
\begin{cases}
-\vec{g}_{il}(\mathbf{X})  & \text{if } j = k,\text{ }i \neq l\\
 \vec{g}_{kj}(\mathbf{X}) & \text{if } i = l,\text{ }j \neq k\\
\mathbf{0} & \text{otherwise}
\end{cases}
\end{equation}
\end{lem}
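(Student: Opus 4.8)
The plan is to compute the Lie bracket directly from the definition. For vector fields $f(y)$ and $h(y)$ on $\mathbb{R}^{n^2}$ (with $y = \mbox{vec}(\mathbf{X})$), the bracket is $[f,h](y) = \frac{\partial h}{\partial y}f(y) - \frac{\partial f}{\partial y}h(y)$. Since $g_{ij}(\mathbf{X}) = \mathbb{I}_{ij}\mathbf{X}$ is \emph{linear} in $\mathbf{X}$, its vectorization $\vec{g}_{ij}(y) = (I \otimes \mathbb{I}_{ij})\,y$ is linear in $y$, so the Jacobian $\frac{\partial \vec{g}_{ij}}{\partial y}$ is the constant matrix $A_{ij} := I \otimes \mathbb{I}_{ij}$. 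Hence the bracket collapses to a commutator of constant matrices acting on $y$:
\begin{equation}
[\vec{g}_{ij},\vec{g}_{kl}](y) = \bigl(A_{kl}A_{ij} - A_{ij}A_{kl}\bigr)y = \bigl(I \otimes (\mathbb{I}_{kl}\mathbb{I}_{ij} - \mathbb{I}_{ij}\mathbb{I}_{kl})\bigr)y.
\end{equation}
So everything reduces to the elementary identity for index (matrix-unit) matrices, $\mathbb{I}_{ij}\mathbb{I}_{kl} = \delta_{jk}\,\mathbb{I}_{il}$, which gives $\mathbb{I}_{kl}\mathbb{I}_{ij} - \mathbb{I}_{ij}\mathbb{I}_{kl} = \delta_{li}\,\mathbb{I}_{kj} - \delta_{jk}\,\mathbb{I}_{il}$.

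I would then read off the three cases. If $j = k$ and $i \neq l$, the first delta vanishes and the second is $1$, giving $-(I\otimes\mathbb{I}_{il})y = -\vec{g}_{il}(\mathbf{X})$. If $i = l$ and $j \neq k$, symmetrically we get $+(I\otimes\mathbb{I}_{kj})y = \vec{g}_{kj}(\mathbf{X})$. In all remaining cases both deltas vanish (note that $j=k$ \emph{and} $i=l$ simultaneously would force the bracket of a field with itself up to relabeling — more precisely it makes $\mathbb{I}_{kl}\mathbb{I}_{ij}=\mathbb{I}_{il}\mathbb{I}_{il}$ and one checks this case also collapses, or is excluded by the indices being those of a single generator), so the bracket is $\mathbf{0}$. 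Translating back through $\mbox{vec}(\cdot)$ and $g_{ij}(\mathbf{X}) = \mathbb{I}_{ij}\mathbf{X}$ completes the proof.

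There is no real obstacle here; the only thing requiring care is bookkeeping — making sure the Jacobian-times-vector-field computation is set up with the right order of factors (the bracket convention $[\,f,h\,] = Dh\cdot f - Df\cdot h$ determines the sign, and getting it consistent with the stated $-\vec{g}_{il}$ versus $+\vec{g}_{kj}$ asymmetry), and confirming via $\mathbb{I}_{ij}\mathbb{I}_{kl} = \delta_{jk}\mathbb{I}_{il}$ that the "otherwise" case genuinely covers everything not in the first two cases. One could alternatively avoid vectorization entirely and work with the matrix-valued bracket: since the flows are $\mathbf{X} \mapsto e^{s\mathbb{I}_{kl}}e^{t\mathbb{I}_{ij}}\mathbf{X}$ acting by left multiplication, the vector fields $g_{ij}$ are the left-invariant (really, restriction of linear) fields associated with $\mathbb{I}_{ij} \in \mathfrak{gl}_n$, and the bracket of such fields is $[g_{ij},g_{kl}](\mathbf{X}) = [\mathbb{I}_{kl},\mathbb{I}_{ij}]\,\mathbf{X}$ — wait, one must be careful about the sign coming from left- versus right-invariance. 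I would state the vectorized computation as the primary argument since it is unambiguous, and mention the Lie-algebra interpretation as a remark.
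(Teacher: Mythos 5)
Your computation follows essentially the same route as the paper's own proof: vectorize via $\mbox{vec}(\mathbb{I}_{ij}\mathbf{X}) = (I\otimes\mathbb{I}_{ij})\mbox{vec}(\mathbf{X})$, observe that the vector fields are linear so the Lie bracket collapses to a commutator of the constant matrices $I\otimes\mathbb{I}_{ij}$, and then invoke $\mathbb{I}_{ij}\mathbb{I}_{kl}=\delta_{jk}\mathbb{I}_{il}$ to read off the cases. The first two cases and the sub-case $j\neq k$, $i\neq l$ are handled correctly and match the paper.

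There is, however, a genuine problem in your treatment of the remaining sub-case $j=k$ \emph{and} $i=l$ with $(i,j)\neq(k,l)$, i.e.\ the bracket $[\vec{g}_{ij},\vec{g}_{ji}]$ for $i\neq j$. You assert that ``one checks this case also collapses, or is excluded,'' but neither is true: the pair $(k,l)=(j,i)$ is a legitimate distinct generator whenever $(j,i)\in E$ (which always holds here since $G$ is undirected), and your own identity gives
\begin{equation}
\delta_{li}\,\mathbb{I}_{kj}-\delta_{jk}\,\mathbb{I}_{il} \;=\; \mathbb{I}_{jj}-\mathbb{I}_{ii}\neq 0 ,
\end{equation}
so that $[\vec{g}_{ij},\vec{g}_{ji}]=\vec{g}_{jj}(\mathbf{X})-\vec{g}_{ii}(\mathbf{X})$, which is nonzero for invertible $\mathbf{X}$. (Your intermediate claim that this case makes $\mathbb{I}_{kl}\mathbb{I}_{ij}=\mathbb{I}_{il}\mathbb{I}_{il}$ is also miscomputed; it equals $\mathbb{I}_{ji}\mathbb{I}_{ij}=\mathbb{I}_{jj}$.) To be fair, the lemma's ``otherwise $\mathbf{0}$'' clause is itself false in exactly this sub-case, and the paper's own proof glosses over it in the same way (its first case is even stated with the condition $i\neq j$ rather than $i\neq l$). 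The discrepancy is harmless downstream---Lemma \ref{connectionLemma} only uses brackets along a path, where the first two cases apply, and an extra nonzero bracket can only enlarge the involutive closure---but as written your dismissal of this case is a step that fails, and a correct proof should either add a fourth case to the statement or restrict the ``otherwise'' clause to $j\neq k$ and $i\neq l$.
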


\begin{proof}
The Lie bracket $[\vec{g}_{ij},\vec{g}_{kl}]$ is given by
\begin{equation}
\frac{\partial \vec{g}_{kl}}{\partial \mbox{vec}(\mathbf{X})} \vec{g}_{ij} - \frac{\partial \vec{g}_{ij}}{\partial \mbox{vec}(\mathbf{X})} \vec{g}_{kl},
\label{eq:Lie}
\end{equation}
where we have suppressed the explicit dependence on $\mathbf{X}$. 

Substitution of Equation \ref{gDef} into Equation \ref{eq:Lie}, the above expression yields
\begin{equation}
\label{lem1eqn1}
\frac{\partial (\mbox{vec}(\mathbb{I}_{kl}\mathbf{X}))}{\partial \mbox{vec}(\mathbf{X})} \mbox{vec}(\mathbb{I}_{ij}\mathbf{X}) - \frac{\partial (\mbox{vec}(\mathbb{I}_{ij}\mathbf{X}))}{\partial \mbox{vec}(\mathbf{X})} \mbox{vec}(\mathbb{I}_{kl}\mathbf{X}),
\end{equation} 
which can be rewritten, using the Kronecker product, as
\begin{equation}
\begin{aligned}
\frac{\partial((I \otimes \mathbb{I}_{kl}) \mbox{vec}(\mathbf{X}))}{\partial \mbox{vec}(\mathbf{X})} (I \otimes \mathbb{I}_{ij}) \mbox{vec}(\mathbf{X}) \\- \frac{\partial ((I \otimes \mathbb{I}_{ij}) \mbox{vec}(\mathbf{X}))}{\partial \mbox{vec}(\mathbf{X})} (I \otimes \mathbb{I}_{kl})\mbox{vec}(\mathbf{X})
\end{aligned}
\end{equation}
Taking the derivatives yields
\begin{equation}
(I \otimes \mathbb{I}_{kl})(I \otimes \mathbb{I}_{ij}) \mbox{vec}(\mathbf{X}) -  (I \otimes \mathbb{I}_{ij})(I \otimes \mathbb{I}_{kl})\mbox{vec}(\mathbf{X}).
\end{equation}
Using the mixed product property of the Kronecker product, Equation \ref{eq:Lie} can be further simplified as
\begin{equation}
(I \otimes \mathbb{I}_{kl} \mathbb{I}_{ij}) \mbox{vec}(\mathbf{X}) -  (I \otimes \mathbb{I}_{ij}\mathbb{I}_{kl})\mbox{vec}(\mathbf{X}),
\end{equation}
i.e., the Lie bracket in Equation \ref{eq:Lie} becomes
\begin{equation}
\label{lem1eqn2}
[\vec{g}_{ij}(\mathbf{X}),\vec{g}_{kl}(\mathbf{X})] = 
\mbox{vec}(\mathbb{I}_{kl} \mathbb{I}_{ij} \mathbf{X}) - \mbox{vec}(\mathbb{I}_{ij}\mathbb{I}_{kl} \mathbf{X}).
\end{equation}

Now,
using the fact that, $\mathbb{I}_{ij} \mathbb{I}_{kl} = \mathbb{I}_{il}$ if $j = k$ and $\mathbb{I}_{ij} \mathbb{I}_{kl} = 0$ otherwise, we can break down Equation \ref{lem1eqn2} into 3 cases:  First if $j = k$ and $i \neq j$ we get
$
[\vec{g}_{ij},\vec{g}_{kl}]=
- \mbox{vec}(\mathbb{I}_{il} \mathbf{X}) = -\vec{g}_{il}$.
The second case occurs when $i = l$ and $j \neq k$, in which case
$
[\vec{g}_{ij},\vec{g}_{kl}]=
\mbox{vec}(\mathbb{I}_{kj} \mathbf{X}) = \vec{g}_{kj}$.
Otherwise, the Lie bracket is $\mathbf{0}$, and the lemma follows. 
\end{proof}


Now that Lie brackets can be computed in general for this problem, we must determine if the involutive closure of the distribution associated with the system in Equation \ref{eq:vec} contains enough independent vector fields for local controllability. To help with this determination, we provide the following lemma.

\begin{lem}
\label{connectionLemma}
If node $i$ is path-connected to node $j$, then $\vec{g}_{ij}(\mathbf{X})$ is in the distribution $\overline{\Delta}(\mathbf{X})$.
\end{lem}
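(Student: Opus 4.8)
## Proof Proposal for Lemma \ref{connectionLemma}

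The plan is to induct on the length of a shortest path in $G$ between nodes $i$ and $j$. The only features of $\overline{\Delta}(\mathbf{X})$ the argument uses are that, by definition, it contains $\vec{g}_{kl}(\mathbf{X})$ for every $(k,l)\in E$ and that it is closed under the Lie bracket of any two of its elements, so the proof reduces to combining Lemma \ref{liebracketlemma} with this closure property.

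For the base case, a path of length one from $i$ to $j$ means $(i,j)\in E$, so $\vec{g}_{ij}(\mathbf{X})$ lies in $\mbox{span}\{g_{kl}\}\subseteq\overline{\Delta}(\mathbf{X})$ directly; the degenerate case $i=j$ is covered by the standing assumption that $(i,i)\in E$ for every node, which likewise puts $\vec{g}_{ii}(\mathbf{X})$ in $\overline{\Delta}(\mathbf{X})$.

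For the inductive step, I would assume the claim holds for all pairs joined by a path of length less than $\ell$, and take a shortest path $i=v_0,v_1,\ldots,v_\ell=j$ with $\ell\geq 2$. Its prefix $v_0,\ldots,v_{\ell-1}$ is again a shortest path, so the inductive hypothesis gives $\vec{g}_{i v_{\ell-1}}(\mathbf{X})\in\overline{\Delta}(\mathbf{X})$, while $(v_{\ell-1},j)\in E$ gives $\vec{g}_{v_{\ell-1}j}(\mathbf{X})\in\overline{\Delta}(\mathbf{X})$. Since the path is shortest and $\ell\geq 2$, we have $i\neq j$, so the inner indices of this pair coincide (both equal $v_{\ell-1}$) and the side condition of Lemma \ref{liebracketlemma} is met, yielding
\begin{equation}
[\vec{g}_{i v_{\ell-1}}(\mathbf{X}),\,\vec{g}_{v_{\ell-1}j}(\mathbf{X})] = -\vec{g}_{ij}(\mathbf{X}).
\end{equation}
Involutivity of $\overline{\Delta}(\mathbf{X})$ then forces $-\vec{g}_{ij}(\mathbf{X})$, hence $\vec{g}_{ij}(\mathbf{X})$, into $\overline{\Delta}(\mathbf{X})$, closing the induction.

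I do not expect a genuine obstacle here: the only point needing care is that the side conditions of Lemma \ref{liebracketlemma} ($j=k$ and $i\neq l$) hold at every step, which is precisely why one works along a shortest (equivalently simple) path and peels off the $i=j$ case via the self-loop assumption. The real content of the controllability argument lies in Lemma \ref{liebracketlemma} already established, and in the subsequent step that upgrades ``$\overline{\Delta}$ contains every $\vec{g}_{ij}$ on a connected graph'' into full rank of the distribution at every point of $\mbox{GL}_{+}^n(\mathbb{R})$, which is where I would expect the delicate counting to appear.
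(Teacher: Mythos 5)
Your proof is correct and follows essentially the same route as the paper's: both walk along a path from $i$ to $j$ and repeatedly apply Lemma \ref{liebracketlemma} together with closure of $\overline{\Delta}(\mathbf{X})$ under the Lie bracket, the only difference being that you organize the iteration as an induction on shortest-path length (which neatly absorbs the sign bookkeeping the paper does by parity of the path length, and makes the side condition $i\neq l$ explicit). No gap here.
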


\begin{proof}
That node $i$ is path-connected to node $j$ means that there is a path through adjacent nodes in the graph $G$ that starts at node $i$ and ends at node $j$.  Assume that the path goes through the nodes
$N_1, \ldots, N_q$, i.e., $N_1$ is adjacent to $N_2$, $N_2$ is adjacent to $N_3$, and so forth,
while $N_1 = i$ and $N_q = j$. Since these nodes are adjacent, we, by definition, have that
$\vec{g}_{N_1N_2}, \vec{g}_{N_2N_3}, \ldots, \vec{g}_{N_{q-1}N_q} \in \Delta(\mathbf{X})$.

The involutive closure contains every possible Lie bracket that can be recursively created from elements $\Delta(\mathbf{X})$, which implies that the problem is to create $\vec{g}_{ij}$ from some combination of Lie brackets from elements in $\Delta(\mathbf{X})$. 
And, from Lemma \ref{liebracketlemma}, we know that
$[\vec{g}_{N_1N_2},\vec{g}_{N_2N_3}]$ is equal to $-\vec{g}_{N_1N_3}$. Applying Lemma \ref{liebracketlemma} again gives
$[-\vec{g}_{N_1N_3},\vec{g}_{N_3N_4}] = \vec{g}_{N_1N_4}$. This procedure can be repeated until we arrive at one of two possible cases. If $q$ is even, the result is $[-\vec{g}_{N_1N_{q-1}},\vec{g}_{N_{q-1}N_q}] = \vec{g}_{N_1N_q}$. If $q$ is odd we get $[\vec{g}_{N_1N_{q-1}},\vec{g}_{N_{q-1}N_q}] = -\vec{g}_{N_1N_q}$. In either case, we are able to construct $\vec{g}_{N_1N_q}$ from previous Lie brackets, as shown in Figure \ref{fig:LB}. And, as $N_1 = i$ and $N_q = j$, we have $\vec{g}_{ij}\in \overline{\Delta}(\mathbf{X})$. 
\end{proof}

\begin{center}
\begin{figure}[thpb]
\begin{center}
\includegraphics[scale=0.6]{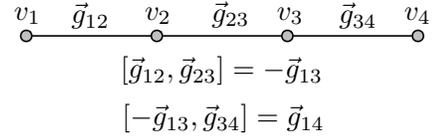}
\end{center}
\caption{An example of the construction in the proof of
Lemma \ref{connectionLemma} with node $i$ and $j$ being represented by $v_1$ and $v_4$, respectively.  
}
      \label{fig:LB}
\end{figure}
\end{center}

To establish that the system is controllable on $\mbox{GL}_{+}^n(\mathbb{R})$, $\overline{\Delta}(\mathbf{X})$ must have rank $n^2$ everywhere on this set, which is the topic of the next lemma.
\begin{lem}\label{bigThm} If $G$ is connected then 
 $\overline{\Delta}(\mathbf{X})$ has dimension $n^2$ if and only if $\mbox{rank}(\mathbf{X}) = n$. 
\end{lem}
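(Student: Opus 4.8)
The plan is to compute the distribution $\overline{\Delta}(\mathbf{X})$ explicitly and then simply count its dimension. First I would establish that, whenever $G$ is connected,
\begin{equation}
\overline{\Delta}(\mathbf{X}) = \mbox{span}\{\vec{g}_{ij}(\mathbf{X}) : 1 \le i,j \le n\}.
\label{eq:deltaclaim}
\end{equation}
The inclusion $\supseteq$ is where connectivity is used: for $i \neq j$ the nodes $i$ and $j$ are path-connected, so Lemma \ref{connectionLemma} gives $\vec{g}_{ij}(\mathbf{X}) \in \overline{\Delta}(\mathbf{X})$, while for $i = j$ we have $(i,i)\in E$ by the standing self-loop assumption, so $\vec{g}_{ii}(\mathbf{X}) \in \Delta(\mathbf{X}) \subseteq \overline{\Delta}(\mathbf{X})$ already. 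For the inclusion $\subseteq$, Lemma \ref{liebracketlemma} shows that the Lie bracket of any two generators $\vec{g}_{ij}$, $\vec{g}_{kl}$ is again one of the $\pm\vec{g}_{ab}$ (or $\mathbf{0}$); hence the right-hand side of \eqref{eq:deltaclaim} contains $\Delta(\mathbf{X})$ and is closed under Lie brackets, so it contains the involutive closure. This pins $\overline{\Delta}(\mathbf{X})$ down exactly.

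Next I would translate the dimension question into linear algebra. Since $\mbox{vec}(\cdot)$ is a linear isomorphism, $\dim \overline{\Delta}(\mathbf{X}) = \dim \mbox{span}\{\mathbb{I}_{ij}\mathbf{X} : 1\le i,j \le n\}$. The matrices $\{\mathbb{I}_{ij}\}$ form a basis of $\mathbb{R}^{n\times n}$ and $A \mapsto A\mathbf{X}$ is linear, so this span is exactly the image $\{A\mathbf{X} : A \in \mathbb{R}^{n\times n}\}$. A matrix of the form $A\mathbf{X}$ is precisely one all of whose $n$ rows lie in the row space of $\mathbf{X}$, an $\mbox{rank}(\mathbf{X})$-dimensional subspace of $\mathbb{R}^n$; equivalently, the kernel of $A \mapsto A\mathbf{X}$ consists of the matrices whose rows all lie in the left null space of $\mathbf{X}$, giving kernel dimension $n(n - \mbox{rank}(\mathbf{X}))$, so by rank--nullity the image has dimension $n\,\mbox{rank}(\mathbf{X})$.

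Combining the two steps yields $\dim \overline{\Delta}(\mathbf{X}) = n\,\mbox{rank}(\mathbf{X})$, which equals $n^2$ if and only if $\mbox{rank}(\mathbf{X}) = n$, proving the lemma in both directions at once. The step I expect to require the most care is the identity \eqref{eq:deltaclaim}: the $\supseteq$ direction is the one place the connectedness hypothesis genuinely enters (via Lemma \ref{connectionLemma} together with the self-loop assumption on $G$), and the $\subseteq$ direction needs the observation that Lemma \ref{liebracketlemma} keeps the finite generating set closed under bracketing, so the involutive closure produces no vector fields beyond the $\vec{g}_{ij}$ themselves. The remaining dimension count is routine linear algebra.
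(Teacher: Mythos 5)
Your proof is correct, and it takes a genuinely different (and in places more careful) route than the paper's. Both arguments start by using Lemma \ref{connectionLemma} together with the self-loop assumption to place every $\vec{g}_{ij}$, $(i,j)\in V\times V$, inside $\overline{\Delta}(\mathbf{X})$. From there the paper treats the two directions separately: for ``$\mbox{dim}=n^2 \Rightarrow \mbox{rank}(\mathbf{X})=n$'' it sums the diagonal generators, $\sum_i g_{ii}=\sum_i\mathbb{I}_{ii}\mathbf{X}=\mathbf{X}$, and concludes full rank from the linear independence of the $g_{ii}$ (a step that, as written, does not actually follow, since a sum of $n$ linearly independent matrices need not have rank $n$); for the converse it posits a dependence $\sum\alpha_{ij}g_{ij}=g_{kl}$, cancels the invertible $\mathbf{X}$, and derives a contradiction among the index matrices. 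You instead prove the single identity $\mbox{dim}\,\overline{\Delta}(\mathbf{X})=n\,\mbox{rank}(\mathbf{X})$ by recognizing $\mbox{span}\{\mathbb{I}_{ij}\mathbf{X}\}$ as the image of the linear map $A\mapsto A\mathbf{X}$ and applying rank--nullity; this yields both directions simultaneously, is a strictly stronger statement, and repairs the weak spot in the paper's forward direction. You also supply a step the paper only asserts: the inclusion $\overline{\Delta}(\mathbf{X})\subseteq\mbox{span}\{\vec{g}_{ij}\}$, obtained from Lemma \ref{liebracketlemma} because the generating set is closed under bracketing. That inclusion is genuinely needed for the ``only if'' direction (otherwise the involutive closure could a priori exceed the span of the $\vec{g}_{ij}$), so your version is the more complete argument; what the paper's version buys is mainly brevity and a concrete contradiction in the ``if'' direction.
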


\begin{proof}To prove this lemma, we need to show that the implication goes both ways. 

Assume first that $\mbox{dim}(\overline{\Delta}(\mathbf{X})) = n^2$.  If $G$ is connected then, by Lemma \ref{connectionLemma}, 
\begin{equation}
\overline{\Delta}(\mathbf{X})=\mbox{span}\{\vec{g}_{ij},~\forall (i,j) \in V\times V\}.
\end{equation}
Since $V\times V$ has cardinality $n^2$, we can conclude that 
each $\vec{g}_{ij}$ is in $\overline{\Delta}(\mathbf{X})$. 

For the purpose of the proof, it is convinient to go back to the matrix formulation, and we recall that
 $\vec{g}_{ij} = vec(g_{ij})$. 
As such, we will use
the matrix form $g_{ij}$ 
to construct $\mathbf{X}$.  And, since the goal is to form a matrix with rank $n$, only $n$ linearly independent matrices are needed.  So, we arbitrarily choose to form $\mathbf{X}$ from the ``diagonal" set $\{g_{11}, g_{22}, \ldots, g_{nn} \}$.  Using the fact that $g_{ij} = \mathbb{I}_{ij} \mathbf{X}$, we can write,
\begin{equation*}
\sum\limits_{i=1}^n g_{ii} = \sum\limits_{i=1}^n \mathbb{I}_{ii} \mathbf{X},
\end{equation*}
which simplifies to
\begin{equation}
\sum\limits_{i=1}^n g_{ii} = \mathbf{X}.
\end{equation}
And, since $\mathbf{X}$ is a linear combination of $n$ linearly independent matrices, $\mbox{rank}(\mathbf{X})=n$, and the first implication follows.



Next, we must show
that
\begin{equation}
\mbox{rank}(\mathbf{X}) = n \Rightarrow  \mbox{dim}(\overline{\Delta}(\mathbf{X})) = n^2,
\end{equation}
which we do by contradiction.  Using the expression $g_{ij} = \mathbb{I}_{ij} \mathbf{X}$, $n^2$ matrices can be formed from $\mathbf{X}$.  Let us assume that they are not linearly independent.  This implies that there exists a set of coefficients $\alpha_{ij}$ such that, for some $(k,l)$,
\begin{equation}
\sum\limits_{(i,j) \neq (k,l)} g_{ij} \alpha_{ij} = g_{kl}.
\label{eq:XX}
\end{equation}
Since $\mathbf{X}$ has full rank, 
$\mathbf{X}$ can be removed from Equation \ref{eq:XX} based on the fact that $g_{ij} = \mathbb{I}_{ij} \mathbf{X}$, yielding
\begin{equation}
\sum\limits_{(i,j) \neq (k,l)} \mathbb{I}_{ij} \alpha_{ij} = \mathbb{I}_{kl}.
\label{eq:false}
\end{equation}

By definition of the index matrix, Equation \ref{eq:false} cannot be true, since
every matrix in the sum on the left has a value of zero where $\mathbb{I}_{kl}$ has value of $1$. Therefore, we have reached a contradiction and can conclude that $\mbox{dim}(\overline{\Delta}(\mathbf{X}) = n^2$.


\end{proof}

Since $\mathbf{X}$ is really a state transition matrix, i.e., it is indeed invertible (with $\mbox{rank}(\mathbf{X})=n$),
the system in Equation \ref{driftFreeSystem} is locally controllable 
everywhere on $\mbox{GL}^n_{+}(\mathbb{R})$ as long as the underlying graph $G$ is connected:
\begin{thm}
The system 
$$
\dot{\mathbf{X}}=W\mathbf{X},~W\in\mbox{sparse}(G)
$$
is locally controllable everywhere on $\mbox{GL}^n_{+}(\mathbb{R})$ if $G$ is connected.
\label{cor:ctrb}
\end{thm}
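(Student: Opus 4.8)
The plan is to invoke Chow's Theorem directly, using the preceding lemmas to verify its hypothesis at every point of $\mbox{GL}^n_+(\mathbb{R})$. First I would recall that, after vectorization, Equation \ref{eq:vec} exhibits the dynamics $\dot{\mathbf{X}} = W\mathbf{X}$, $W\in\mbox{sparse}(G)$, as a drift-free, control-affine system on $\mathbb{R}^{n^2}$ of exactly the form required by Chow's Theorem, with the edge weights $w_{ij}$, $(i,j)\in E$, playing the role of the control inputs $u_1,\ldots,u_p$ and the vector fields $\vec{g}_{ij}(\mathbf{X})$ playing the role of $g_1,\ldots,g_p$. The ambient state space has dimension $n^2$, so Chow's Theorem asserts local controllability about a point $\mathbf{X}_0$ precisely when $\dim(\overline{\Delta}(\mathbf{X}_0)) = n^2$, where $\overline{\Delta}$ is the involutive closure of $\mbox{span}\{\vec{g}_{ij} : (i,j)\in E\}$.

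Next I would apply Lemma \ref{bigThm}: since $G$ is connected by hypothesis, that lemma gives $\dim(\overline{\Delta}(\mathbf{X})) = n^2$ if and only if $\mbox{rank}(\mathbf{X}) = n$. It therefore suffices to observe that every $\mathbf{X}\in\mbox{GL}^n_+(\mathbb{R})$ satisfies $\mbox{det}(\mathbf{X}) > 0$, hence in particular $\mbox{det}(\mathbf{X})\neq 0$, so $\mbox{rank}(\mathbf{X}) = n$. Combining these two facts, $\dim(\overline{\Delta}(\mathbf{X})) = n^2$ at every $\mathbf{X}\in\mbox{GL}^n_+(\mathbb{R})$, and Chow's Theorem then yields local controllability about each such point. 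Since this holds at every point of $\mbox{GL}^n_+(\mathbb{R})$, and $\mbox{GL}^n_+(\mathbb{R})$ is an open subset of $\mathbb{R}^{n^2}$ so that nearby trajectories remain in it, the system is locally controllable everywhere on $\mbox{GL}^n_+(\mathbb{R})$, which is the claim.

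There is no genuinely hard step here — the theorem is an assembly of Lemma \ref{liebracketlemma}, Lemma \ref{connectionLemma} and Lemma \ref{bigThm} through Chow's Theorem — but two points deserve care. First, the bookkeeping of dimensions: one must consistently treat the state as living in $\mathbb{R}^{n^2}$ (the vectorized matrix), so that ``full rank of the involutive closure'' means dimension $n^2$ rather than $n$; this is exactly why Lemma \ref{bigThm} was phrased in terms of $n^2$. Second, the conclusion is only \emph{local} controllability at each point; upgrading this to (global) controllability on the connected manifold $\mbox{GL}^n_+(\mathbb{R})$ — which is what is ultimately needed to solve Problem 1 and prove Theorem \ref{necAndSuff} — relies on the remark accompanying Chow's Theorem that local controllability everywhere implies controllability, together with the fact that $\mbox{GL}^n_+(\mathbb{R})$ is connected and contains the initial condition $I$. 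I would flag this last point here but defer its full exploitation to the proof of Theorem \ref{necAndSuff}.
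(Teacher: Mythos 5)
Your proposal is correct and follows essentially the same route as the paper, which presents Theorem \ref{cor:ctrb} as an immediate consequence of Lemma \ref{bigThm} combined with Chow's Theorem, using the fact that every $\mathbf{X}\in\mbox{GL}^n_{+}(\mathbb{R})$ has $\mbox{rank}(\mathbf{X})=n$. Your additional remarks on the $n^2$ dimension bookkeeping and on deferring the local-to-global upgrade to the proof of Theorem \ref{necAndSuff} accurately reflect how the paper organizes the argument.
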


Theorem \ref{cor:ctrb} and Lemma \ref{nosol} give us all the ammunition needed to prove the main result in this paper, namely Theorem \ref{necAndSuff}:
%
%
%
\begin{proof}[Proof of Theorem \ref{necAndSuff}]
Lemma \ref{nosol} tells us that a solution only exists if $T\in\mbox{GL}_{+}^n(\mathbb{R})$, so what remains is to establish that this is indeed sufficient. Hence, assume that $T\in\mbox{GL}_{+}^n(\mathbb{R})$. Since $I\in\mbox{GL}_{+}^n(\mathbb{R})$, and $\mbox{GL}_{+}^n(\mathbb{R})$ is connected \cite{strang}, there is a continuous curve of matrices in $\mbox{GL}_{+}^n(\mathbb{R})$ that connects $I$ and $T$. And, by Corollary \ref{cor:ctrb}, every point along the path connecting $I$ and $T$ is locally controllable. The system being drift-free moreover implies that it can flow along this curve, e.g., \cite{brockett1972system}.
Therefore, a solution to Problem 1 exists if $T\in\mbox{GL}_{+}^n(\mathbb{R})$.
\end{proof}


If we return to the conensus problem, we have already established that $T_{cons}$ in Equation \ref{eq:tcons} is not computable in finite time using local rules. However, consider instead the transformation
\begin{equation}
T_{cons2}=\left[\begin{array}{cccc}
1/n&1/n&\cdots&1/n\\
0&1&\cdots&0\\
\vdots&\vdots&\ddots&\vdots\\
0&0&\cdots&1\end{array}\right].
\end{equation}
We have
\begin{equation}
\mbox{det}(T_{cons2})=\frac{1}{n}
\end{equation}
and, as such, it is computable using local rules. In this case, the network average is only computed by a single node (node 1 in this case), while the remaining nodes return to their initial values at the end of the computation. This can in fact be generalized to any scalar, non-zero, linear map $\ell:\mathbb{R}^n\rightarrow\mathbb{R}$ through
$$
T_\ell\xi=\left[\begin{array}{c}
\ell(\xi)\\
\xi_2\\
\vdots\\
\xi_n
\end{array}\right],
$$
where we have assumed that $\ell(\xi)$ depends on $\xi_1$.\begin{footnote}{If not, simply pick another node in the network that $\xi$ does depend on, as the node where the computation takes place.}\end{footnote}
The point with this is that {\it it is possible to compute any scalar, non-zero, linear map as long as the computation only has to take place at a single node.}

%
%

\section{Optimal Local Interactions}
\label{OptimalControlProb}
Just because we know that a computation $T\xi$ can be done using local rules it does not follow that we can (easily) find these rules, encoded through $W(t)\in\mbox{sparse}(G)$, such that $\dot{\mathbf{X}}=W\mathbf{X},~\mathbf{X}(t_0)=I, \mathbf{X}(t_f)=T$. In this section, we address this problem in the context of optimal control.
%
%
%

Let the cost be given by
\begin{equation}
J(W) = \int\limits_0^{t_f} \frac{1}{2} \| W(t) \|_F^2 dt,
\end{equation}
where $\|\cdot\|_F$ is the Frobenius norm. The resulting constrained minimization problem becomes

\medskip
\noindent
{\bf Problem 2 [Optimal Local Interactions]}

\noindent
{\it 
\begin{equation}
\min_{W}  J(W) = \int\limits_0^{t_f} \frac{1}{2} \| W(t) \|_F^2 dt
\end{equation}
such that
\begin{equation}
\begin{array}{l}
\dot{\mathbf{X}} = W\mathbf{X}\\ 
W(t)\in\mbox{sparse}(G),~\forall t\in[t_0,t_f]\\
\mathbf{X}(t_0) = I,~\mathbf{X}(t_f)=T.
\end{array}
\end{equation}
}
%
%
%
%
The Hamiltonian associated with Problem 2 (e.g., \cite{liberzon2012calculus}), with costate matrix $\lambda$, is given by
\begin{equation}
H = \mbox{vec}(\lambda)^T  \mbox{vec}( W\mathbf{X}) + \frac{1}{2} \| W\|^2_F.
\end{equation}
We can rewrite the Hamiltonian as
\begin{equation}
H = \sum\limits_{i=1}^n  \sum\limits_{j |(i,j)\in E}\sum\limits_{k=1}^n \lambda_{ik} w_{ij} \mathbf{X}_{jk} + \frac{1}{2} \sum\limits_{i=1}^n \sum\limits_{j |(i,j)\in E} w_{ij}^2.
\end{equation}
The optimality conditions are
\begin{equation}
0 = \frac{\partial H}{\partial w_{ij}} = \sum\limits_{k=1}^n \lambda_{ik} \mathbf{X}_{jk}   + w_{ij},
\end{equation}
i.e., the optimal weights are given by
\begin{equation}
\label{optimalityCondition}
w_{ij} =  -\sum\limits_{k=1}^n \lambda_{ik}\mathbf{X}_{jk},
\end{equation}
which  yields $m + n$ optimality conditions. This is also the number of nonzero values in the $W$ matrix. 

We get  the costate equations from the derivative of the Hamiltonian with respect to $\mathbf{X}$:
\begin{equation}
\dot{\lambda}_{ij} = - \frac{\partial H}{\partial \mathbf{X}_{ij}} = - \sum\limits_{k | (i,k)\in E}  w_{ki} \lambda_{kj}.
\end{equation}

By substituting the optimality conditions into both the state and costate equations, we get $2n$ equations with initial and final conditions on $\mathbf{X}_{ij}$. The resulting, two-point boundary problem becomes
\begin{eqnarray}
\dot{\mathbf{X}}_{ij} &=& -\sum\limits_{k|(i,k)\in E} \mathbf{X}_{kj} \sum\limits_{l = 1}^n \lambda_{il}\mathbf{X}_{kl}\nonumber\\
\label{eqn:twoPointBoundryProblem}
\mathbf{X}(t_0) &=& I,~\mathbf{X}(t_f)~=~T\\
\dot{\lambda}_{ij} &=& \sum\limits_{k |(i,k)\in E} \lambda_{kj} \sum\limits_{l = 1}^n \lambda_{kl}\mathbf{X}_{il},\nonumber
\end{eqnarray}
which can be 
solved numerically, as will be seen in the next section.



\section{Simulations and Examples}
\label{Applications}
Computing linear transformation of states can be useful in a variety of network applications. In this section we explore two concrete examples. The first involves improving the convergence rates in distributed computations and the second involes information exchange among non-local agents.

\subsection{Improving Convergence Rates}
Consider, again, the consensus equation,
\begin{equation}
\dot{x} = -L_sx,
\label{eq:Ls}
\end{equation}
where
$x \in \mathbb{R}^n$ is the state of the system and $L_s$ is the graph Laplacian associated with a given, {\it sparse} yet connected graph $G_s$. 
These dynamics are known to
converge exponentially in the algebraic connectivity of the graph $G_s$, e.g.,\cite{olfati2007consensus}, where the algebraic connectivity is given by the second smallest eigenvalue of $L_s$. 

For some applications this convergence rate may not be fast enough.  Instead, one might want to make the system behave as if the graph was more dense; thus improve the convergence rate. Let $G_d$ be a {\it dense} graph, with corresponding consensus dynamics
\begin{equation}
\dot{z} = -L_dz,
\label{eq:Ld}
\end{equation}
where $z$ represents a ``desired" state (how we would like $x$ to behave) and $L_d$ is the graph Laplacian of  $G_d$.  The desired system in Equation \ref{eq:Ld} has state transition matrix $\Phi_d(t,t_0)$, with $\Phi_d(t_0,t_0)=I$, and
\begin{equation}
\Phi_d(t_f,t_0) = e^{-L_d(t_f-t_0)},
\end{equation}
which we thus set as the linear transformation we would like to compute by the original, sparse network. 

In other words, let
$T = e^{-L_d(t_f-t_0)}$.  The goal is to compute $T$ using local time varying weights on the graph $G_s$. And, since matrix exponentials are invertible, $T\in\mbox{GL}_{+}^n(\mathbb{R})$, and a solution does indeed exist.


As an instantiation of this, consider a 5 node system. We can solve Problem 2 numerically, using test-shooting, for this system in order to find
$W$ such that
\begin{equation}
\begin{array}{l}
\dot{\mathbf{X}} = W\mathbf{X},~W\in\mbox{sparse}(G_s)\\
\mathbf{X}(t_0)=I,~\mathbf{X}(t_f) = e^{-L_d(t_f-t_0)}.
\end{array}
\end{equation}
The solution to this problem yields a set of time varying weights $W$ and matrices $\mathbf{X}$.
The weights can be executed in a decentralized manner, once they have been obtained (using centralized computations). The result is a
sparse network that acts at a higher rate, as if it was indeed dense.
This is shown in Figures \ref{StateFunctions} - \ref{fig:cons}. In Figure \ref{fig:cons}, the agreement error,
$$
\|x(t)-{1}/{n}\mathbf{1}^T\mathbf{1}x(0)\|
$$
is shown both for the original (sparse) consensus dynamics (Equation \ref{eq:Ls}) and for the optimal, ``densified" version.

\begin{center}
\begin{figure}[thpb]
\includegraphics[scale=0.66]{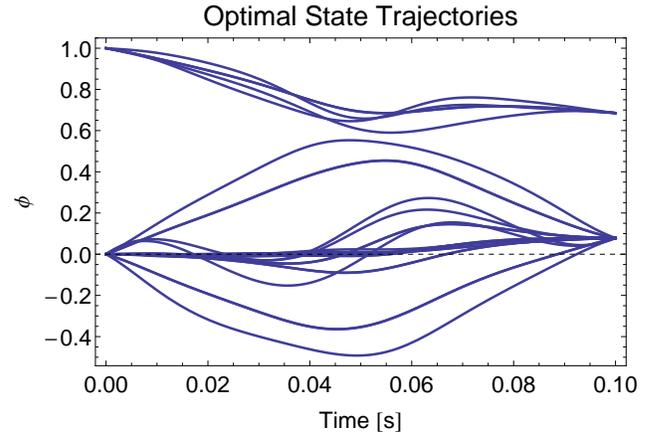}
\caption{This plot shows the evolution of each element of the state transition matrix $\Phi(t,t_0)$ over the time interval from $t_0$ to $t_f$. Since $\Phi(t_0,t_0)=I$ all elements are either $0$ or $1$ initially. 
}
      \label{StateFunctions}
\end{figure}
\end{center}

\begin{figure}[thpb]
\centering
\includegraphics[scale=0.66]{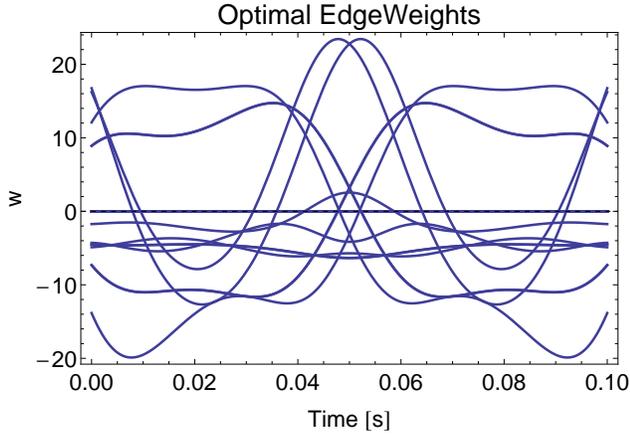}
\caption{The local weight functions each agent uses in order to compute the linear transformation given by $e^{-L_d(t_f-t_0)}$.  The weights are computed by solving Problem 2 numerically.}
\label{WeightFunctions}
\end{figure}

\begin{center}
\begin{figure}[thpb]
\begin{center}
\includegraphics[scale=0.47]{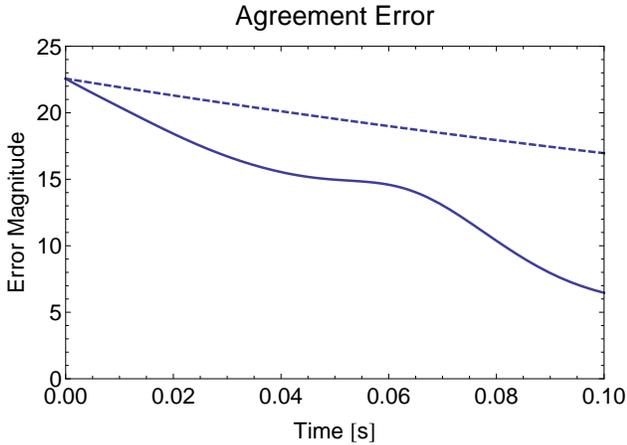}
\end{center}
\caption{The agreement errors for the original, sparse consensus equation (dashed) and the ``densified" version (solid). As expected, the latter has a higher rate of convergence.}
      \label{fig:cons}
\end{figure}
\end{center}

\subsection{Swapping Node Values}
As another example, consider the situation when 
the linear transformation represents a reordering (or swapping) of states. For a $4$ node case, where agents 1 and 2 and agents 3 and 4 are to ``swap" state values, the transformation matrix becomes
\begin{equation}
T_{swap} = \begin{bmatrix}
0 & 1 & 0 & 0 \\
1 & 0 & 0 & 0 \\
0 & 0 & 0 & 1 \\
0 & 0 & 1 & 0
\end{bmatrix}.
\end{equation}
However, the linear interpolation between $I$ and $T_{swap}$ contains a singular matrix, which makes the two-point boundary problem numerically ill-conditioned when using shooting methods, e.g., \cite{shoot}. A way around this problem is to avoid this singular matrix by solving two sequential two-point boundary problems.
%
%
%
%


As an example, in the first iteration, we let the boundary conditions be $\mathbf{X}(t_0)=I,~\mathbf{X}((t_f-t_0)/2)=T_1$. For the second iteration, they are $\mathbf{X}((t_f-t_0)/2)=T_1,~\mathbf{X}(t_f)=T_{swap}$, where
\begin{equation}
\begin{aligned}
T_1 = 
\begin{bmatrix}
0 & 1 & 0 & 0 \\
0 & 0 & 1 & 0 \\
1 & 0 & 0 & 0 \\
0 & 0 & 0 & 1 \\
\end{bmatrix}.
\end{aligned}
\end{equation}

This sequential approach avoids the numerical ill-conditioning, and the solution is shown in Figures \ref{WeightFunctions2} - \ref{WeightFunctions4}.

\begin{figure}[thpb]
\centering
\includegraphics[scale=0.47]{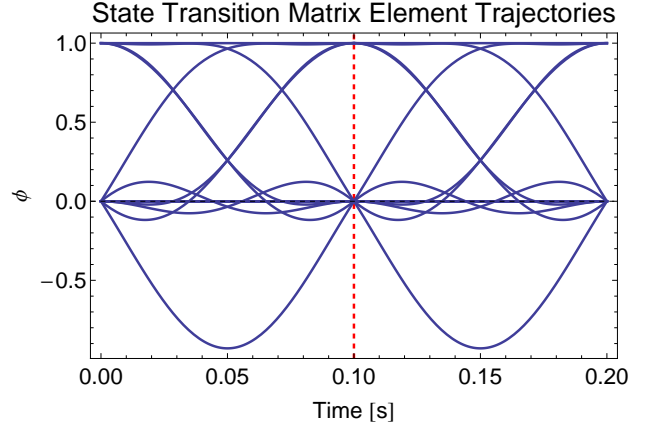}
\caption{The evolution of the state transition matrix for the 4-node swapping problem, with $\Phi(t_0,t_0)=I,~\Phi((t_f-t_0)/2,t_0)=T_1$, and $\Phi(t_f,t_0)=T_{swap}$.}
\label{WeightFunctions2}
\end{figure}

\begin{figure}[thpb]
\centering
\includegraphics[scale=0.47]{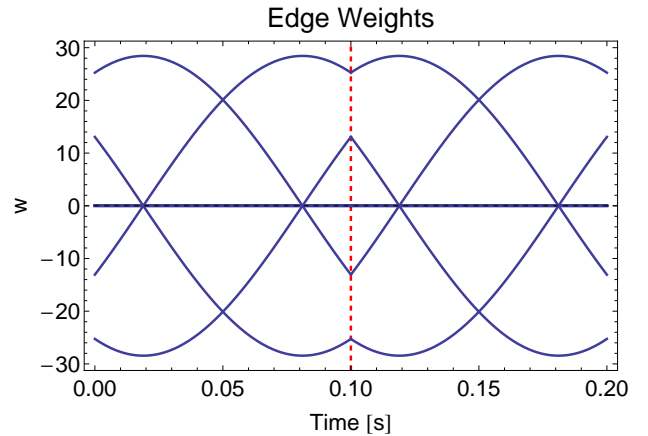}
\caption{The weight functions define the local interactions needed to achieve the swap in the 4-node case.}
\label{WeightFunctions3}
\end{figure}

\begin{figure}[thpb]
\centering
\includegraphics[scale=0.47]{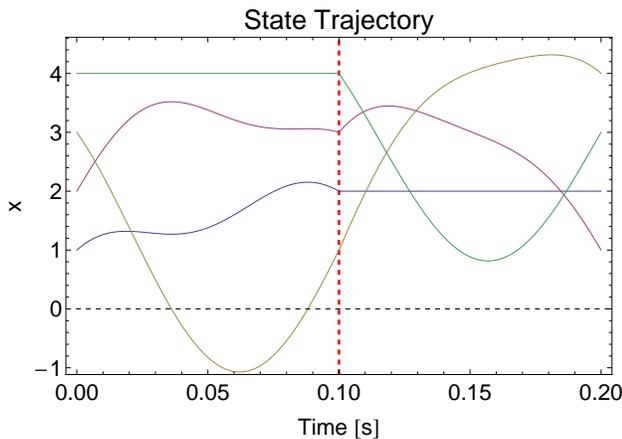}
\caption{The evolution of the node states for the swap problem. The initial state is $x(t_0) = [1, 2, 3, 4]^T$ and the final state is $x(t_f) = [2,1,4,3]^T$, i.e., the first and second states swapped values and the third and fourth state swapped values.}
\label{WeightFunctions4}
\end{figure}

\section{Conclusions}
\label{conclusion}


In this paper, a step was taken
towards computing arbitrary global functions on networks with local interaction rules. 
In particular, it presented a method 
which allows a networked system to compute global, linear transformations using only local rules. 

We derived necessary and sufficient conditions under which it is possible to use
a distributed, time-varying weighting scheme to compute the transformation $T$ for undirected, connected networks with fixed topology. Specifically, we showed that 
the necessary and sufficient condition for $T$ to be locally computable is that it has positive determinant, i.e., $T\in\mbox{GL}_{+}^n(\mathbb{R})$.


\section*{Acknowledgment}
\addcontentsline{toc}{section}{Acknowledgment}
This work was sponsored in part by a grant from the US Air Force Office for Sponsored Research.
The authors would like to thank Professor
Mark Costello at the Georgia Institute of Technology for his advice regarding this work. 

\bibliographystyle{IEEEtran}
\bibliography{references}

\begin{thebibliography}{10}
\providecommand{\url}[1]{#1}
\csname url@rmstyle\endcsname
\providecommand{\newblock}{\relax}
\providecommand{\bibinfo}[2]{#2}
\providecommand\BIBentrySTDinterwordspacing{\spaceskip=0pt\relax}
\providecommand\BIBentryALTinterwordstretchfactor{4}
\providecommand\BIBentryALTinterwordspacing{\spaceskip=\fontdimen2\font plus
\BIBentryALTinterwordstretchfactor\fontdimen3\font minus
  \fontdimen4\font\relax}
\providecommand\BIBforeignlanguage[2]{{%
\expandafter\ifx\csname l@#1\endcsname\relax
\typeout{** WARNING: IEEEtran.bst: No hyphenation pattern has been}%
\typeout{** loaded for the language `#1'. Using the pattern for}%
\typeout{** the default language instead.}%
\else
\language=\csname l@#1\endcsname
\fi
#2}}

\bibitem{BCM09}
F.~Bullo, J.~Cortes, and S.~Mart'nez, \emph{Distributed Control of Robotic
  Networks. A Mathematical Approach to Motion Coordination Algorithms}.\hskip
  1em plus 0.5em minus 0.4em\relax Princeton University Press, 2009.

\bibitem{mesbahi2010graph}
M.~Mesbahi and M.~Egerstedt, \emph{Graph theoretic methods in multiagent
  networks}.\hskip 1em plus 0.5em minus 0.4em\relax Princeton University Press,
  2010.

\bibitem{olfati2007consensus}
R.~Olfati-Saber, J.~A. Fax, and R.~M. Murray, ``Consensus and cooperation in
  networked multi-agent systems,'' \emph{Proceedings of the IEEE}, vol.~95,
  no.~1, pp. 215--233, 2007.

\bibitem{RenBeard}
W.~Ren and R.~W. Beard, \emph{Distributed Consensus in Multi-vehicle
  Cooperative Control}.\hskip 1em plus 0.5em minus 0.4em\relax Springer-Verlag,
  2008.

\bibitem{dimeas2005operation}
A.~L. Dimeas and N.~D. Hatziargyriou, ``Operation of a multiagent system for
  microgrid control,'' \emph{Power Systems, IEEE Transactions on}, vol.~20,
  no.~3, pp. 1447--1455, 2005.

\bibitem{Prosumer}
T.~Ramachandran, Z.~Costello, P.~Kingston, S.~Grijalva, and M.~Egerstedt,
  ``Distributed power allocation in prosumer networks,'' in \emph{IFAC Necsys},
  2012.

\bibitem{SG}
S.~Grijalva, M.~Costley, and N.~Ainsworth, ``Prosumer-based control
  architecture for the future electricity grid,'' in \emph{IEEE
  Multi-Conference on Systems and Control}, 2011.

\bibitem{balch1998behavior}
T.~Balch and R.~C. Arkin, ``Behavior-based formation control for multirobot
  teams,'' \emph{Robotics and Automation, IEEE Transactions on}, vol.~14,
  no.~6, pp. 926--939, 1998.

\bibitem{JE07}
M.~Ji and M.~Egerstedt, ``Distributed coordination control of multi-agent
  systems while preserving connectedness,'' \emph{IEEE Transactions on
  Robotics}, vol.~23, no.~4, pp. 693--703, 2007.

\bibitem{Jadbabaie03}
A.~Jadbabaie, J.~Lin, and A.~S. Morse, ``Coordination of groups of mobile
  autonomous agents using nearest neighbor rules,'' \emph{IEEE Transactions on
  Automatic Control}, vol.~48, no.~6, pp. 988--1001, 2003.

\bibitem{Tanner03}
H.~Tanner, A.~Jadbabaie, and G.~Pappas, ``Stable flocking of mobile agents,
  part {II} : Dynamic topology,'' in \emph{Proc. 42nd IEEE Conf. Decision
  Control}, 2003.

\bibitem{vijay}
N.~Michael and V.~Kumar, ``Controlling shapes of ensembles of robots of finite
  size with nonholonomic constraints,'' in \emph{RSS}, 2008.

\bibitem{romer2004design}
K.~Romer and F.~Mattern, ``The design space of wireless sensor networks,''
  \emph{Wireless Communications, IEEE}, vol.~11, no.~6, pp. 54--61, 2004.

\bibitem{fumin}
F.~Zhang and N.~Leonard, ``Coordinated patterns of unit speed particles on a
  closed curve,'' \emph{Systems and Control Letters}, vol.~56, no.~6, pp.
  397--407, 2007.

\bibitem{marculescu2003electronic}
D.~Marculescu, R.~Marculescu, N.~H. Zamora, P.~Stanley-Marbell, P.~K. Khosla,
  S.~Park, S.~Jayaraman, S.~Jung, C.~Lauterbach, W.~Weber, \emph{et~al.},
  ``Electronic textiles: A platform for pervasive computing,''
  \emph{Proceedings of the IEEE}, vol.~91, no.~12, pp. 1995--2018, 2003.

\bibitem{cortes2005coordination}
J.~Cort{\'e}s and F.~Bullo, ``Coordination and geometric optimization via
  distributed dynamical systems,'' \emph{SIAM Journal on Control and
  Optimization}, vol.~44, no.~5, pp. 1543--1574, 2005.

\bibitem{NOP10}
A.~Nedic, A.~Ozdaglar, and A.~Parrilo, ``Constrained consensus and optimization
  in multi-agent networks,'' \emph{IEEE Transactions on Automatic Control},
  vol.~55, no.~4, pp. 922--938, 2010.

\bibitem{olfati2005consensus}
R.~Olfati-Saber and J.~S. Shamma, ``Consensus filters for sensor networks and
  distributed sensor fusion,'' in \emph{Decision and Control, 2005 and 2005
  European Control Conference. CDC-ECC'05. 44th IEEE Conference on}.\hskip 1em
  plus 0.5em minus 0.4em\relax IEEE, 2005, pp. 6698--6703.

\bibitem{sundaram2008distributed}
S.~Sundaram and C.~N. Hadjicostis, ``Distributed function calculation and
  consensus using linear iterative strategies,'' \emph{Selected Areas in
  Communications, IEEE Journal on}, vol.~26, no.~4, pp. 650--660, 2008.

\bibitem{rotkowitz2006characterization}
M.~Rotkowitz and S.~Lall, ``A characterization of convex problems in
  decentralized control,'' \emph{Automatic Control, IEEE Transactions on},
  vol.~51, no.~2, pp. 274--286, 2006.

\bibitem{swigart2009graph}
J.~Swigart and S.~Lall, ``A graph-theoretic approach to distributed control
  over networks,'' in \emph{Decision and Control, 2009 held jointly with the
  2009 28th Chinese Control Conference. CDC/CCC 2009. Proceedings of the 48th
  IEEE Conference on}.\hskip 1em plus 0.5em minus 0.4em\relax IEEE, 2009, pp.
  5409--5414.

\bibitem{RWB}
R.~Brockett, \emph{Finite Dimensional Linear Systems}.\hskip 1em plus 0.5em
  minus 0.4em\relax John Wiley \& Sons, Inc., 1970.

\bibitem{sastry1999nonlinear}
S.~Sastry, \emph{Nonlinear systems: analysis, stability, and control}.\hskip
  1em plus 0.5em minus 0.4em\relax Springer New York, 1999, vol.~10.

\bibitem{strang}
G.~Strang, \emph{Introduction to Linear Algebra}.\hskip 1em plus 0.5em minus
  0.4em\relax Wellesley-Cambridge Press, 1993.

\bibitem{brockett1972system}
R.~W. Brockett, ``System theory on group manifolds and coset spaces,''
  \emph{SIAM Journal on Control}, vol.~10, no.~2, pp. 265--284, 1972.

\bibitem{liberzon2012calculus}
D.~Liberzon, \emph{Calculus of variations and optimal control theory: a concise
  introduction}.\hskip 1em plus 0.5em minus 0.4em\relax Princeton University
  Press, 2012.

\bibitem{shoot}
W.~Press, S.~Teukolsky, W.~Vetterling, and B.~Flannery, \emph{Numerical
  Recipes: The Art of Scientific Computing}.\hskip 1em plus 0.5em minus
  0.4em\relax Cambridge University Press, 2007.

\end{thebibliography}
\end{document}